\newtheorem{lemma}{Lemma}
\newcommand{\ee}{\mathrm{e}}
\newcommand{\ad}{\mathrm{ad}}
\newcommand{\nL}{\mathcal{L}}
\newcommand{\QQ}{\mathbb{Q}}
\newcommand{\nA}{\mathcal{A}}
\begin{document}
\begin{frontmatter}
\title{A relatively short self-contained proof of the Baker-Campbell-Hausdorff theorem}

\author{Harald Hofst\"atter}
\address{Universit{\"a}t Wien, Institut f{\"u}r Mathematik, Oskar-Morgenstern-Platz 1, A-1090 Wien, Austria}
\ead{hofi@harald-hofstaetter.at}
\ead[url]{www.harald-hofstaetter.at}

\begin{keyword}
Baker-Campbell-Hausdorff theorem \sep free Lie algebra \sep Lie polynomial 
\MSC[2010] 17B01
\end{keyword}

\begin{abstract}We give a new purely algebraic proof of the Baker-Campbell-Hausdorff theorem, which states that the homogeneous components of the formal
expansion of $\log(\ee^A\ee^B)$ are Lie polynomials.
Our proof is based on a recurrence formula for these components and a  lemma that states that if under certain conditions a commutator of a non-commuting variable and a given polynomial is a Lie polynomial, then the given polynomial itself is a Lie polynomial.
\end{abstract}

\end{frontmatter}

\sloppy
\section{Introduction}\label{Sec:intro}
Let $\QQ\langle\langle\nA\rangle\rangle$ denote the ring of formal power series with rational coefficients in  non-commuting variables from a set $\nA$.\footnote{In the following we assume that the basic field is $\QQ$, because this is the natural setting for computations around the Baker-Campbell-Hausdorff theorem. We note that 
everything  remains valid if 
$\QQ$ is consistently replaced by an extension field $K\supset\QQ$ like $\mathbb{R}$
or $\mathbb{C}$.
}
We  assume that the set $\nA$ is finite and contains at least two elements.
 In this ring exponential and logarithm functions 
are defined by their power series expansions
\begin{equation*}
\exp(X)=\ee^X=\sum_{n=0}^\infty\frac{1}{n!}X^n,\quad
\log(1+X)=\sum_{n=1}^\infty\frac{(-1)^{n+1}}{n}X^n
\end{equation*}
for elements $X$ of $\QQ\langle\langle\nA\rangle\rangle$ without constant term.
For two non-commuting variables $A$, $B$ we consider the formal
expansion of 
\begin{equation}\label{eq:BCH}
 C=\log(\ee^A\ee^B)=C_1+C_2+\dots
\end{equation}
in $\QQ\langle\langle A,B\rangle\rangle$,
where for each $n\geq 1$ all terms of degree $n$ are collected 
in the homogeneous polynomial $C_n$ of degree $n$.
The Baker-Campbell-Hausdorff (BCH) theorem states that each of these 
polynomials $C_n$ can be written as a Lie polynomial, i.e.,
 a linear combination of $A$ and $B$ and (possibly nested) commutator terms in $A$ and $B$, see \cite[Section~7.6]{stillwell2010naive}.

The record for the shortest self-contained proof of the BCH theorem is presumably
held by M.~Eichler with his ingenious two-page proof \cite{eichler1968},
see also \cite[Section~7.7]{stillwell2010naive}.
This purely algebraic proof  requires almost no theoretical preparation, 
but, on the other hand, it does not
provide deeper insights into {\em why} the BCH theorem is true.
In this last aspect, our own approach to a proof seems to be more illuminating.
 
\section{Towards a proof of the BCH theorem} 
We introduce some notations.
Let $\QQ\langle\nA\rangle\subset\QQ\langle\langle\nA\rangle\rangle $ denote  the subring of all 
formal power series
with only finitely many 
non-zero coefficients,
i.e., the ring of polynomials in the
non-commuting variables $\nA$.
$\QQ\langle\nA\rangle$ considered as a vector space over
the field $\QQ$ with Lie bracket defined by the commutator
$[X,Y]=XY-YX$
is a Lie algebra, i.e., the Lie bracket is bilinear and satisfies
\begin{align*}
&[X,Y]=-[Y,X]\qquad\qquad\qquad\qquad\qquad\qquad\ \mbox{(anti-symmetry)}\\
&[X,[Y,Z]]+[Z,[X,Y]]+[Y,[Z,X]]=0
\qquad\, \mbox{(Jacobi identity)}
\end{align*}
for all $X,Y,Z\in\QQ\langle\nA\rangle$, see \cite[Section~4.4]{stillwell2010naive}. We define  
$\nL_\QQ(\nA)\subset\QQ\langle\nA\rangle$ as the smallest subspace of 
$\QQ\langle\nA\rangle$  containing $\nA$ and being closed under Lie brackets.  
$\nL_\QQ(\nA)$ is called the free Lie algebra generated by $\nA$.
It is clear that the elements of $\nL_\QQ(\nA)$ are 
precisely the linear combinations of variables from $\nA$ and
simple and nested Lie brackets in these variables, i.e., the Lie polynomials
in the variables $\nA$.

We begin our considerations aiming at a proof of the BCH theorem with
the well-known formal identity
\begin{equation}\label{eq:basic_identity}
\ee^X Y \ee^{-X} = \ee^{\ad_X}(Y)
:=\sum_{n=0}^\infty\frac{1}{n!}\ad_X^n(Y).
\end{equation}
Here $\ad_X$ is the linear operator defined by $Y\mapsto [X,Y]=XY-YX$ such that
$\ad_X^n(Y)=[X,[X,[\dots [X,Y]\dots]]$ with $X$ occurring $n$ times.
The proof of (\ref{eq:basic_identity}) is  easy:
We define linear operators $L_X$, $R_X$ 
by $L_X(Z)=XZ$, $R_X(Z)=ZX$ such that $\ad_X=L_X-R_X$.
These operators commute, because
$L_XR_X(Z)=XZX=R_XL_X(Z)$.
It follows
\begin{equation*}
\ee^{\ad_X}(Y)=\ee^{L_X-R_X}(Y)=\ee^{L_X}\ee^{-R_X}(Y)
=\ee^{L_X}\ee^{R_{-X}}(Y)
=\ee^{X}Y\ee^{-X}.
\end{equation*}

Using $\ee^C=\ee^A\ee^B$ with $C$ defined by (\ref{eq:BCH}) and applying
(\ref{eq:basic_identity}) twice we obtain
\begin{equation*}
\ee^{\ad_C}(B)\ee^C=\ee^CB=\ee^A\ee^BB=\ee^AB\ee^B=\ee^{\ad_A}(B)\ee^C,
\end{equation*}
and thus, after cancelling the factor $\ee^{C}$,
\begin{equation*}
\sum_{k=0}^\infty\frac{1}{k!}\ad_C^k(B)
=\sum_{k=0}^\infty\frac{1}{k!}\ad_{C_1+C_2+\dots}^k(B)
=\sum_{k=0}^\infty\frac{1}{k!}\ad_A^k(B).
\end{equation*}
In this equation consider the terms of degree $n+1$, 
which leads to
  \begin{equation*}
    \sum_{m=1}^n\frac{1}{m!}\sum_{k_1+\dots+k_m=n\atop k_j\geq 1}
    \ad_{C_{k_1}}\!\circ\ad_{C_{k_2}}\!\circ\dots\circ
    \ad_{C_{k_m}}(B)
    =\frac{1}{n!}\ad_A^n(B),
  \end{equation*}
or, after splitting off the term for $m=1$ from the sum  and reordering,
  \begin{equation}\label{eq:BC_recursion}
    [B,C_n]=\sum_{m=2}^n\frac{1}{m!}\sum_{k_1+\dots+k_m=n\atop k_j\geq 1}
    \ad_{C_{k_1}}\!\circ\ad_{C_{k_2}}\!\circ\dots\circ
    \ad_{C_{k_m}}(B)
    -\frac{1}{n!}\ad_A^n(B).      
  \end{equation}
Here we make two observations:
\begin{enumerate}
\item Provided  equation (\ref{eq:BC_recursion}) can be uniquely solved for $C_n$, it can be used as a recurrence formula for the
computation of the $C_n$.
\item If we assume that all $C_k$ for $k<n$ are Lie 
polynomials in $A,\,B$, then it is obvious that
the right-hand side of (\ref{eq:BC_recursion}) is also a Lie polynomial. If we can
show that $C_n$ is a Lie polynomial provided $[B,C_n]$ is a Lie polynomial, 
then, using induction on $n$, this leads to a proof of the BCH theorem.
\end{enumerate}

With regard to  the first observation, 
the following lemma gives
a criterion for 
the uniqueness of a solution of (\ref{eq:BC_recursion}).
(Because $C_n$ defined by (\ref{eq:BCH}) is a solution,
there is no question about existence.)

\begin{lemma}\label{lemma:ad_injectivity}
If  $\ad_a(P)=[a,P]=0$ for $a\in\nA$ and $P\in \QQ\langle\nA\rangle$    not containing any term $\alpha_ka^k,\,\alpha_k\neq 0,\, k\geq 0$, then
$P=0$.
\end{lemma}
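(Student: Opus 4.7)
The plan is to argue by contradiction: assume $P\neq 0$, expand $P=\sum_w c_w w$ as a finite linear combination of monomial words in the alphabet $\nA$, choose a word $w_0$ in the support (so $c_{w_0}\neq 0$), and deduce that $w_0$ must have the form $a^k$ for some $k\geq 0$, contradicting the hypothesis.

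First I would read off $\ad_a(P)=aP-Pa=0$ coefficient by coefficient. Since both of the maps $w\mapsto aw$ and $w\mapsto wa$ are injective on words, the coefficient of a fixed word $v$ in $aP$ equals $c_{a\backslash v}$ when $v$ begins with $a$ (where $a\backslash v$ is $v$ with its leading letter deleted) and is $0$ otherwise, while the coefficient of $v$ in $Pa$ equals $c_{v/a}$ when $v$ ends with $a$ and is $0$ otherwise. Equating these for every $v$ forces each $w\in\mathrm{supp}(P)$ with $|w|\geq 1$ to end in $a$: otherwise the word $aw$ would have nonzero coefficient $c_w$ in $aP$ but zero coefficient in $Pa$. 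Writing $w=x_1 x_2\cdots x_N$ with $x_N=a$, the unique partner in $\mathrm{supp}(P)$ matching $aw$ is the cyclic shift $\sigma(w):=a\,x_1\cdots x_{N-1}$, and equality of coefficients yields $c_{\sigma(w)}=c_w$. Thus $\sigma$ maps $\mathrm{supp}(P)$ into itself while preserving coefficients.

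Now I iterate. Since $\sigma(w)\in\mathrm{supp}(P)$, it too must end in $a$, which forces $x_{N-1}=a$; then $\sigma^2(w)=a^2 x_1\cdots x_{N-2}$ lies in the support and ends in $a$, forcing $x_{N-2}=a$; and so on. After $N$ steps this pins down $x_i=a$ for every $i$, so $w=a^N$, contradicting the hypothesis that $P$ contains no term $\alpha_k a^k$ with $\alpha_k\neq 0$; the case $|w|=0$ is the $k=0$ instance of the same exclusion. I do not anticipate a real obstacle: the only care needed is to notice that the injectivity of $w\mapsto aw$ and $w\mapsto wa$ makes the coefficient bookkeeping exact, and that the cyclic-shift iteration lands precisely in the family of words the hypothesis forbids.
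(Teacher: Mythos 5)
Your proof is correct, and it lives in the same elementary framework as the paper's: expand everything in the word basis of $\QQ\langle\nA\rangle$, compare the coefficients of $aP$ and $Pa$, and let the excluded terms $\alpha_k a^k$ deliver the contradiction. The mechanics differ, though. The paper finishes in one stroke with an extremal choice: it picks a support word $\widetilde w=vxa^k$ (with $x\neq a$) whose trailing power of $a$ is maximal, and observes that the coefficient of $\widetilde w a$ in $\ad_a(P)=\sum_w(P,w)(aw-wa)$ is exactly $-(P,\widetilde w)\neq 0$, since by maximality no $aw$ with $w$ in the support can equal $\widetilde w a$. You instead establish a structural fact about the whole support (every word of positive length must end in $a$), then iterate the coefficient identity through the cyclic shift $\sigma(w)=a\,x_1\cdots x_{N-1}$, which stays in the support with the same coefficient and, after $N$ steps, forces $w=a^N$, contradicting the hypothesis (with $|w|=0$ covered by the $k=0$ exclusion, as you note). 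Both arguments in fact yield the same underlying characterization, namely that a polynomial commuting with $a$ can only be supported on powers of $a$; the paper's extremal trick buys brevity, while your shift iteration is a bit longer but makes the propagation of coefficients, and hence the reason only $a$-powers survive, completely explicit. No gap to report.
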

\begin{proof}
Towards a contradiction, we assume that there exists a word $w=w_1\cdots w_n$, $w_j\in\nA$ with non-zero coefficient $(P,w)$ in the expansion $P=\sum_w(P,w)w$.
$w$ must contain at least one letter $w_j\neq a$ and thus
has the form
$w=vxa^k$ with $k\geq 0$ for some word $v$ and $x\in\nA$, $x\neq a$.
Let $\widetilde{w}$ be a word with $(P,\widetilde{w})\neq 0$
of this form with $k$ maximal.
Then the coefficient $(\ad_a(P),\widetilde{w}a)$ of $\widetilde{w}a$ in
$\ad_a(P)=\sum_w(P,w)(aw-wa)$ is 
$(\ad_a(P),\widetilde{w}a)=-(P,\widetilde{w})$ because $aw\neq \widetilde{w}a$ for all words $w$ by the maximality of $\widetilde{w}$.
Because $(P,\widetilde{w})\neq 0$ this is a contradiction to
$\ad_a(P)=0$.
\end{proof}

We have thus established the uniqueness of the
solution of (\ref{eq:BC_recursion})
up to a term $\beta_nB^n$. By the following lemma no such term can 
actually occur if $n\geq 2$. Therefore, for $n\geq 2$ the solution of  
(\ref{eq:BC_recursion}) is indeed unique. Furthermore, the
lemma gives us the initial value $C_1=A+B$ for the recursion.

\begin{lemma}\label{lemma:noBBBinC}
In the expansion of 
$C=\log(\ee^A\ee^B)=A+B+[\mbox{terms of degrees $\geq 2$}]$ in $\QQ\langle\langle A,B\rangle\rangle$ there are no terms $\alpha_kA^k$, $\alpha_k\neq 0$, $k\geq 2$ or $\beta_kB^k$, $\beta_k\neq 0$, $k\geq 2$.
\end{lemma}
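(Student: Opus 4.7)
The plan is to exploit two substitution homomorphisms to isolate the ``pure $A$'' and ``pure $B$'' parts of $C$. First I would introduce the continuous $\QQ$-algebra endomorphism $\phi_B\colon\QQ\langle\langle A,B\rangle\rangle\to\QQ\langle\langle A,B\rangle\rangle$ determined by $A\mapsto A$, $B\mapsto 0$, together with its counterpart $\phi_A$ sending $A\mapsto 0$, $B\mapsto B$. Because each of these maps is continuous with respect to the degree filtration and preserves products and $\QQ$-linear combinations, it will commute term-by-term with the defining power series of $\exp$ and $\log$ on elements without constant term.

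Next I would apply $\phi_B$ to $C=\log(\ee^A\ee^B)$ to obtain
\[
\phi_B(C)=\log\bigl(\phi_B(\ee^A)\,\phi_B(\ee^B)\bigr)=\log(\ee^A\cdot 1)=A.
\]
On the other hand, $\phi_B(C)$ is by definition the series obtained from $C$ by deleting every monomial that contains an occurrence of $B$, i.e.\ the ``pure $A$'' part $\sum_{k\geq 0}\alpha_k A^k$ of $C$. Comparing the two expressions therefore forces $\alpha_1=1$ and $\alpha_k=0$ for every $k\neq 1$. The symmetric computation with $\phi_A$ yields $\phi_A(C)=\log(\ee^B)=B$, so $\beta_1=1$ and $\beta_k=0$ for $k\neq 1$. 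Since $C_1$ is necessarily a linear combination of $A$ and $B$, these two conclusions together pin down $C_1=A+B$ and rule out any term $\alpha_kA^k$ or $\beta_kB^k$ with $k\geq 2$, as required.

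I do not foresee a real obstacle here: the only delicate point is the compatibility of $\phi_A,\phi_B$ with the formal $\exp$ and $\log$, and this is immediate from the coefficient-wise definitions of those series together with the multiplicativity and continuity of the substitution maps.
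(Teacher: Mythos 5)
Your argument is correct. It rests on the same underlying idea as the paper's proof—extract from $C$ the part free of one of the two variables and identify it with the logarithm of a single exponential—but you implement it differently: you package the extraction as a substitution homomorphism $\phi_B$ (respectively $\phi_A$) and use its continuity and multiplicativity to push it through $\exp$ and $\log$, getting $\phi_B(C)=\log(\ee^A)=A$ and hence $\alpha_1=1$, $\alpha_k=0$ otherwise. The paper instead splits the series by hand, writing $\ee^A\ee^B=\ee^B+h(A,B)$ with every term of $h(A,B)=(\ee^A-1)\ee^B$ containing an $A$, and observes that expanding $\log$ keeps all cross terms inside the ideal of words containing $A$, so $C=B+H(A,B)$; symmetrically $C=A+\widetilde{H}(A,B)$. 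Your route is slightly more conceptual and delivers the coefficients $\alpha_k,\beta_k$ in one stroke, at the small cost of having to justify (as you note) that the evaluation maps commute with the formal $\exp$ and $\log$ on series without constant term; the paper's route avoids introducing any homomorphism and stays entirely within elementary manipulation of the defining series, which fits its goal of a minimal self-contained exposition. Either version suffices for the role the lemma plays later, namely fixing $C_1=A+B$ and excluding terms $\alpha_kA^k$, $\beta_kB^k$ with $k\geq 2$.
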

\begin{proof}
Write $\ee^A\ee^B=e^B+h(A,B)$ where each term in the expansion of
$h(A,B)=(e^A-1)e^B$ contains at least one $A$. Then
\begin{align*}
C&=\log\ee^A\ee^B =\sum_{n=1}^\infty\frac{(-1)^{n+1}}{n}((e^B-1)+h(A,B))^n
\\
&=\!\sum_{n=1}^\infty\!\frac{(-1)^{n+1}}{n}(e^B-1)^n+H(A,B)
=\log(\ee^B)+H(A,B) = B+H(A,B),
\end{align*}
where each term in the expansion of $H(A,B)$ contains at least one $A$.
A similar argument leads to $C=\log\ee^A\ee^B =A+\widetilde{H}(A,B)$,
where each term in the expansion of $\widetilde{H}(A,B)$ contains at least one $B$.
\end{proof}

With regard to the second of the above observations, it follows 
from 
Lemma~\ref{lemma:noBBBinC} and
Lemma~\ref{lemma:P_Lie_from_adP_Lie} below that for $n\geq 2$,
$C_n$ indeed is a Lie polynomial provided $[B,C_n]$ is a Lie polynomial.
As already mentioned, this together with $C_1=A+B$ completes the proof of the BCH theorem.

\section{A lemma crucial for our proof of the BCH theorem}
\label{Sec:techn}

\begin{lemma}\label{lemma:P_Lie_from_adP_Lie}
Let $a\in\nA$ and $P\in\QQ\langle\nA\rangle$ 
not containing any term $\alpha_ka^k$, $\alpha_k\neq 0$, $k\geq 0$.
If $\ad_{a}(P)=[a,P]$ 
is a Lie polynomial, 
then $P$ is a Lie polynomial. 
\end{lemma}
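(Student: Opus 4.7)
The plan is to apply Friedrichs' characterization of Lie polynomials as the primitive elements under the shuffle coproduct. By linearity of $\ad_a$ I first reduce to the case where $P$ is homogeneous of some degree $n \geq 1$; for $n=1$, the no-$a^k$-term hypothesis forces $P \in \mathrm{span}(\nA \setminus \{a\}) \subseteq \nL_\QQ(\nA)$ directly.

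For $n \geq 2$, I introduce the coproduct $\Delta\colon \QQ\langle\nA\rangle \to \QQ\langle\nA\rangle \otimes \QQ\langle\nA\rangle$ as the unique algebra homomorphism with $\Delta(x) = x \otimes 1 + 1 \otimes x$ for $x \in \nA$. By Friedrichs' theorem, $P$ is a Lie polynomial iff $\Delta(P) = P \otimes 1 + 1 \otimes P$. Set $R := \Delta(P) - P \otimes 1 - 1 \otimes P$. Using that $\Delta$ is an algebra homomorphism and $\Delta(a) = a \otimes 1 + 1 \otimes a$, one checks
\[
\Delta([a,P]) = [a,P] \otimes 1 + 1 \otimes [a,P] + (\ad_a \otimes 1 + 1 \otimes \ad_a)(R),
\]
so the hypothesis $[a,P] \in \nL_\QQ(\nA)$ forces $(\ad_a \otimes 1 + 1 \otimes \ad_a)(R) = 0$.

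To conclude $R = 0$, and hence $P \in \nL_\QQ(\nA)$, I would combine this commutator equation with the information coming from the no-$a^k$-term hypothesis. The latter translates into the vanishing of all coefficients of $R$ on the tensor monomials $a^i \otimes a^{n-i}$, since this coefficient equals $\binom{n}{i}$ times the coefficient of $a^n$ in $P$. I would then proceed by induction on the bidegree components $R_{i,n-i}$: the equations extracted from the commutator relation show that one tensor factor of each $R_{i,n-i}$ is forced to be a polynomial in $a$ by Lemma~1 applied factorwise, and the absence of pure-$a$ tensor monomials then eliminates the remaining freedom.

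The principal obstacle is closing this bidegree induction: the commutator equation alone only partially constrains the components $R_{i,n-i}$. Closing the induction appears to require either invoking the coassociativity of $\Delta$ (which yields the additional relation $(1 \otimes \Delta - \Delta \otimes 1)(R) = R \otimes 1 - 1 \otimes R$) or recasting the entire argument via Lazard elimination and PBW, which reduces the claim to an analogous but potentially more tractable statement about an injective derivation on a smaller algebra.
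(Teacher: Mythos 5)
Your setup is correct as far as it goes: the reduction to homogeneous $P$, the identity $\Delta([a,P])=[a,P]\otimes 1+1\otimes[a,P]+(\ad_a\otimes 1+1\otimes\ad_a)(R)$ with $R=\Delta(P)-P\otimes 1-1\otimes P$, and the observation that the coefficient of $a^i\otimes a^{n-i}$ in $R$ is $\binom{n}{i}$ times the coefficient of $a^n$ in $P$, hence zero. But the proof stops exactly at its crucial point, and you say so yourself: you never deduce $R=0$ from $(\ad_a\otimes 1+1\otimes\ad_a)(R)=0$ together with the vanishing of these coefficients. ``Lemma~\ref{lemma:ad_injectivity} applied factorwise'' is not enough, because the kernel of $\ad_a\otimes 1+1\otimes\ad_a$ is not a priori $(\ker\ad_a)\otimes(\ker\ad_a)$: in the bidegree decomposition $R=\sum_i R_{i,n-i}$ the equations $(\ad_a\otimes 1)(R_{i-1,n-i+1})+(1\otimes\ad_a)(R_{i,n-i})=0$ allow cross-cancellation between neighbouring components, which is precisely the ``partial constraint'' you point to, and neither of your proposed escapes (coassociativity, or Lazard elimination plus PBW) is carried out or shown to suffice. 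So as submitted the argument has a genuine gap at its decisive step.

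For what it is worth, the gap can be closed along the lines you sketch, but it needs one elementary fact you never isolate: no nonzero polynomial in $a$ lies in the image of $\ad_a$ (the coefficient of $a^k$ in $au$ and in $ua$ both equal $(u,a^{k-1})$, so they cancel in $[a,u]$). Since $R$ has no components in bidegrees $(0,n)$ and $(n,0)$, the chain of bidegree equations starts with $(1\otimes\ad_a)(R_{1,n-1})=0$, forcing the second tensor factor of $R_{1,n-1}$ to be a power of $a$ by Lemma~\ref{lemma:ad_injectivity}; the next equation then splits, because a power of $a$ cannot occur in the image of $\ad_a$, forcing the first factor of $R_{1,n-1}$ to centralize $a$ and the second factor of $R_{2,n-2}$ to be a power of $a$, and so on. This yields $R\in\QQ[a]\otimes\QQ[a]$, after which your coefficient condition gives $R=0$. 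Note also that even with this repaired, your route leans on the hard direction of Friedrichs' criterion (primitive implies Lie), a theorem of essentially the same depth as the lemma itself, which you do not prove; this is a marked contrast to the paper's proof, which stays self-contained by working with the right normed bracketing $r$ and the elementary identities $r(P)=nP$ and $r(Pa)=-\ad_a(P)$, again combined with Lemma~\ref{lemma:ad_injectivity}.
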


It is possible to  give a short
(but not  self-contained)
proof of  this lemma
by taking  strong results from the theory of free Lie algebras
like    \cite[Theorem~1.4]{reutenauer1993free} for granted.
The following  self-contained proof  was essentially obtained by
distilling  
from  \cite[Chapter~1]{reutenauer1993free}
just enough material to derive Lemma~\ref{lemma:P_Lie_from_adP_Lie}.

Our proof is based on properties of the right normed bracketing
which
 for words $w=w_1\cdots w_n$,
$w_j\in\nA$ is defined by $r(w)=[w_1,[w_2,[\dots[w_{n-1}, w_n]\dots]]]$.
Because the set of words is a basis of the vector space
$\QQ\langle\nA\rangle$, $r$ can  uniquely be extended
to a linear map $r:\QQ\langle\nA\rangle\to\QQ\langle\nA\rangle$
satisfying the basic identities 
\begin{equation*}
r(x)=x, \quad
r(xP)= \ad_x(r(P)),\qquad x\in\nA, \ P\in\QQ\langle\nA\rangle.
\end{equation*}
By recursive applications of the Jacobi identity written as
\begin{equation*}
[[X,Y],Z]=[X,[Y,Z]]-[Y,[X,Z]]
\end{equation*}
to a Lie polynomial $P\in\nL_\QQ(\nA)$, we eventually obtain
a linear combination of right normed elements
$[a_1,[a_2,[\dots[a_{n-1}, a_n]\dots]]]$, $n\geq 1$, $a_j\in\nA$.
This shows that for each Lie polynomial $P\in\nL_\QQ(\nA)$ there exists
a  polynomial $\widetilde{P}\in\QQ\langle\nA\rangle$ such that
$P=r(\widetilde{P})$. 

After the following sequence of Lemmas~\ref{lemma:baker_identity}--\ref{lemma:rP_nP} based on  \cite[Section~1.6.6]{reutenauer1993free}  
and  an auxiliary result from Lemma~\ref{lemma:rPa_ad_aP},
the proof of Lemma~\ref{lemma:P_Lie_from_adP_Lie}
will finally succeed in a relatively simple way.

\begin{lemma}\label{lemma:baker_identity}
For polynomials $P,\,Q\in\QQ\langle\nA\rangle$ we have
\begin{equation}\label{eq:baker_identity}
r(r(P)Q)=[r(P),r(Q)].
\end{equation}
\end{lemma}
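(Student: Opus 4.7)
The plan is to prove the identity by induction on the length of $P$ viewed as a word. Since both sides of (\ref{eq:baker_identity}) depend linearly on $P$ (through the linearity of $r$), it suffices to treat the case where $P = w_1 \cdots w_n$ is a single word with $w_j \in \nA$.

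The base case $n = 1$, i.e.\ $P = x \in \nA$, is immediate from the defining recursion $r(xQ) = \ad_x(r(Q))$: both sides equal $[x, r(Q)]$.

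For the inductive step, write $P = xP'$ with $x \in \nA$ and $P'$ a shorter word. Expanding $r(P) = [x, r(P')] = x\,r(P') - r(P')\,x$ splits the left-hand side as
\begin{equation*}
r(r(P)Q) \;=\; r(x \cdot r(P')\,Q) \;-\; r(r(P') \cdot xQ).
\end{equation*}
The first term collapses, via the defining recursion for $r$, to $\ad_x$ applied to $r(r(P')Q)$, which by the inductive hypothesis equals $[x, [r(P'), r(Q)]]$. The second term has the form $r(r(P')\widetilde{Q})$ with $\widetilde{Q} = xQ$, so the inductive hypothesis (applied with $Q$ replaced by $xQ$, which is legitimate since the lemma is being proved for every $Q$) rewrites it as $[r(P'), r(xQ)] = [r(P'), [x, r(Q)]]$.

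Combining the two pieces gives $[x, [r(P'), r(Q)]] - [r(P'), [x, r(Q)]]$, which by the Jacobi identity in the form $[[X,Y],Z] = [X,[Y,Z]] - [Y,[X,Z]]$ (applied with $X=x$, $Y=r(P')$, $Z=r(Q)$) equals $[[x, r(P')], r(Q)] = [r(P), r(Q)]$, as desired. I expect no substantial obstacle here: the only things to notice are that the induction must be on $P$ rather than $Q$, and that the hypothesis has to be invoked for the second term with $xQ$ in place of $Q$, after which the Jacobi identity in the indicated form delivers the conclusion mechanically.
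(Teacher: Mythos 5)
Your proof is correct and follows essentially the same route as the paper: induction on the length of $P$, splitting $r(P)=[x,r(P')]$, applying the inductive hypothesis to both resulting terms (the second with $xQ$ in place of $Q$), and finishing with the Jacobi identity in the form $[[X,Y],Z]=[X,[Y,Z]]-[Y,[X,Z]]$. No gaps.
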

\begin{proof}
Because both sides of the identity are linear in $P$, we
only have to consider the special case where $P$ is a word.
We use induction on the length $n$ of $P$. 
For $n=1$ we have $P\in\nA$ and (\ref{eq:baker_identity}) follows
immediately. 
For $n>1$ write $P=x\widetilde{P}$ with $x\in\nA$ and $\widetilde{P}$ a word of length $n-1$. Then,
\begin{align*}
&r(r(P)Q)=r(r(x\widetilde{P})Q)
=r([x,r(\widetilde{P})]Q)
=r(xr(\widetilde{P})Q)-r(r(\widetilde{P})xQ)\\
&\qquad=[x,r(r(\widetilde{P})Q)]-[r(\widetilde{P}),r(xQ)]\qquad\quad\mbox{(2nd term by induction hypothesis)}\\
&\qquad=[x,[r(\widetilde{P}),r(Q)]]-[r(\widetilde{P}),[x,r(Q)]]\quad\mbox{(1st term by induction hypothesis)}\\
&\qquad=[[x,r(\widetilde{P})],r(Q)]\qquad\qquad\qquad\qquad\quad\mbox{(by Jacobi identity)}\\
&\qquad=[r(P),r(Q)].\qedhere
\end{align*}
\end{proof}

\begin{lemma}\label{lemma:r_derivation}
For Lie polynomials $P_1,P_2\in\nL_\QQ(\nA)$ we have
\begin{equation*}
r([P_1,P_2])=[P_1,r(P_2)]+[r(P_1),P_2].
\end{equation*}
\end{lemma}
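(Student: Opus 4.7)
The plan is to reduce everything to Lemma~\ref{lemma:baker_identity}, using the remark (made just before that lemma) that every Lie polynomial lies in the image of $r$.

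First, since $P_1,P_2\in\nL_\QQ(\nA)$, I choose $\widetilde{P_1},\widetilde{P_2}\in\QQ\langle\nA\rangle$ with $P_1=r(\widetilde{P_1})$ and $P_2=r(\widetilde{P_2})$. By linearity of $r$,
\begin{equation*}
r([P_1,P_2])=r(P_1P_2)-r(P_2P_1).
\end{equation*}
I then treat the two summands separately, in each case writing \emph{only one} of the two factors as $r$ of something so that Lemma~\ref{lemma:baker_identity} applies directly. For the first summand, substitute $P_1=r(\widetilde{P_1})$ and apply (\ref{eq:baker_identity}) with $P=\widetilde{P_1}$, $Q=P_2$:
\begin{equation*}
r(P_1P_2)=r(r(\widetilde{P_1})P_2)=[r(\widetilde{P_1}),r(P_2)]=[P_1,r(P_2)].
\end{equation*}
For the second summand, substitute instead $P_2=r(\widetilde{P_2})$ and apply (\ref{eq:baker_identity}) with $P=\widetilde{P_2}$, $Q=P_1$:
\begin{equation*}
r(P_2P_1)=r(r(\widetilde{P_2})P_1)=[r(\widetilde{P_2}),r(P_1)]=[P_2,r(P_1)]=-[r(P_1),P_2].
\end{equation*}
Subtracting these two expressions yields exactly $r([P_1,P_2])=[P_1,r(P_2)]+[r(P_1),P_2]$, as required.

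No real obstacle arises; the only point that requires a moment's thought is that Lemma~\ref{lemma:baker_identity} is asymmetric in $P$ and $Q$ (only $P$ appears inside an $r$ on the left), so one has to use the freedom to express \emph{either} $P_1$ \emph{or} $P_2$ as a value of $r$, and exploit that freedom differently in the two halves of $[P_1,P_2]=P_1P_2-P_2P_1$. It is also worth noting that the choices of $\widetilde{P_1},\widetilde{P_2}$ are not canonical, but the final identity depends only on $P_1,P_2$, so this harmless.
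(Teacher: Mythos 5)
Your proof is correct and follows essentially the same route as the paper: both arguments use the fact that every Lie polynomial is in the image of $r$ and then apply Lemma~\ref{lemma:baker_identity} to each of the two terms of $[P_1,P_2]=P_1P_2-P_2P_1$; your variant of substituting $\widetilde{P}_i$ in only one factor per term is just a cosmetic rearrangement of the paper's chain of equalities (where $Q=r(\widetilde{P}_2)$ plays the role of your $Q=P_2$). No gap to report.
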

\begin{proof}
There exist  polynomials 
$\widetilde{P}_1,\,\widetilde{P}_1$ such that
$P_1=r(\widetilde{P}_1),\, P_2=r(\widetilde{P}_2)$.
From Lemma~\ref{lemma:baker_identity} it follows
\begin{align*}
&r([P_1,P_2])=r([r(\widetilde{P}_1),r(\widetilde{P}_2)]) 
=r(r(\widetilde{P}_1)r(\widetilde{P}_2))-r(r(\widetilde{P}_2)r(\widetilde{P}_1))\\
&\qquad=[r(\widetilde{P}_1),r^2(\widetilde{P}_2)]-[r(\widetilde{P}_2),r^2(\widetilde{P}_1)]
=[r(\widetilde{P}_1),r^2(\widetilde{P}_2)]+[r^2(\widetilde{P}_1),r(P_2)]\\
&\qquad=[P_1,r(P_2)]+[r(P_1),P_2].\qedhere
\end{align*}
\end{proof}

\begin{lemma}\label{lemma:rP_nP}
If $P\in\nL_\QQ(\nA)$ is a homogeneous Lie polynomial of degree $n$,
then
\begin{equation}\label{eq:rP_nP}
r(P) = nP.
\end{equation}
\end{lemma}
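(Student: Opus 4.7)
The plan is to prove the identity $r(P)=nP$ by induction on the degree $n$, leveraging Lemma~\ref{lemma:r_derivation}, which says that $r$ behaves like a derivation on $\nL_\QQ(\nA)$.

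For the base case $n=1$, a homogeneous Lie polynomial of degree $1$ is simply a $\QQ$-linear combination $P=\sum_{a\in\nA}\alpha_a a$ of letters. Since $r(a)=a$ for every $a\in\nA$ and $r$ is linear, we get $r(P)=P=1\cdot P$, as required.

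For the inductive step, suppose the formula holds for every homogeneous Lie polynomial of degree strictly less than $n$, with $n\geq 2$. By the inductive definition of the free Lie algebra $\nL_\QQ(\nA)$, every homogeneous element of degree $n$ in $\nL_\QQ(\nA)$ can be written as a finite $\QQ$-linear combination of brackets $[P_1,P_2]$, where $P_1,P_2\in\nL_\QQ(\nA)$ are homogeneous Lie polynomials of degrees $k_1,k_2\geq 1$ with $k_1+k_2=n$ (so that in particular $k_1,k_2<n$). By linearity of $r$ and of the bracket, it suffices to verify the identity on a single such bracket. Applying Lemma~\ref{lemma:r_derivation} and then the induction hypothesis to $P_1$ and $P_2$ separately, we compute
\begin{equation*}
r([P_1,P_2])=[r(P_1),P_2]+[P_1,r(P_2)]=[k_1P_1,P_2]+[P_1,k_2P_2]=(k_1+k_2)[P_1,P_2]=n[P_1,P_2],
\end{equation*}
which completes the induction.

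There is no real obstacle here: the hard work has already been done in Lemmas~\ref{lemma:baker_identity} and~\ref{lemma:r_derivation}. The only point that deserves a moment of care is ensuring that in the inductive decomposition one can always split into two factors of strictly smaller positive degree, so that the induction hypothesis genuinely applies; this is immediate as soon as $n\geq 2$, which is why the base case must be handled separately.
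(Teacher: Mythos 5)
Your proof is correct and follows essentially the same route as the paper: induction on $n$, with the base case of linear combinations of letters and the inductive step reducing by linearity to a single bracket $[P_1,P_2]$ of homogeneous pieces of smaller degree, then applying Lemma~\ref{lemma:r_derivation} together with the induction hypothesis. No substantive differences to report.
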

\begin{proof}
We use induction on $n$.
For $n=1$, $P=\sum_{a\in\nA}\alpha_a a$  and
(\ref{eq:rP_nP}) follows immediately.
If $n>1$, then $P$ is a linear combination of elements of the form  $[P_1,P_2]$ 
where $P_1,\,P_2$ are 
homogeneous Lie polynomials of degrees $n_1,\,n_2\geq 1$ with
$n_1+n_2=n$. For such elements we have
\begin{equation*}
r([P_1,P_2]) = [r(P_1),P_2]+[P_1,r(P_2)]=n_1[P_1,P_2]+n_2[P_1,P_2]=n[P_1,P_2],
\end{equation*}
where we used Lemma~\ref{lemma:r_derivation} and the induction hypothesis $r(P_1)=n_1P_1$,
$r(P_2)=n_2P_2$ for $n_1,n_2<n$.
Using  linearity of $r$ we obtain (\ref{eq:rP_nP}).
\end{proof}

\begin{lemma}\label{lemma:rPa_ad_aP}
If $a\in\nA$ and $P\in\nL_\QQ(\nA)$ is a Lie polynomial,
then
\begin{equation*}
r(Pa) = -\ad_a(P).
\end{equation*}
\end{lemma}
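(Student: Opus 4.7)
The plan is to leverage the Baker identity of Lemma~\ref{lemma:baker_identity} together with the observation, stated in the paragraph preceding that lemma, that every Lie polynomial lies in the image of the right-normed bracketing map $r$. This reduces the claim to a one-line calculation.

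First, by the linearity of $r$ and of $\ad_a$ we may assume $P$ is a single Lie monomial (or we may skip this reduction entirely, as it is not strictly needed). Since $P\in\nL_\QQ(\nA)$, by recursive application of the Jacobi identity one obtains a polynomial $\widetilde{P}\in\QQ\langle\nA\rangle$ with $P=r(\widetilde{P})$; this is exactly the representation already extracted in the discussion immediately before Lemma~\ref{lemma:baker_identity}.

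Now I would write $Pa = r(\widetilde{P})\,a$ and apply Lemma~\ref{lemma:baker_identity} with $Q:=a$. Since $r(a)=a$ by the base identity for $r$, this yields
\begin{equation*}
r(Pa) \;=\; r\bigl(r(\widetilde{P})\,a\bigr) \;=\; [r(\widetilde{P}),\,r(a)] \;=\; [P,\,a] \;=\; -\ad_a(P),
\end{equation*}
which is precisely the claim.

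There is essentially no obstacle once the Baker identity is available: the only point to notice is that the factor on the right of $P$ happens to be a single letter $a\in\nA$, so that $r(a)=a$ matches it exactly, and Baker's identity does the rest. As an alternative, if one preferred not to invoke the existence of $\widetilde{P}$, one could instead assume $P$ homogeneous of degree $n$, use Lemma~\ref{lemma:rP_nP} to write $P=\tfrac{1}{n}r(P)$, and then apply Lemma~\ref{lemma:baker_identity} to $r(\tfrac{1}{n}r(P)\,a)$; the same computation goes through with an extra factor $\tfrac{1}{n}\cdot n=1$ and arrives at $-\ad_a(P)$.
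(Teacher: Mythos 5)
Your proof is correct, but it takes a genuinely different route from the paper. The paper reduces to $P$ homogeneous of degree $n$, writes $Pa=aP-\ad_a(P)$, applies $r$, and then uses Lemma~\ref{lemma:rP_nP} twice (on $P$, of degree $n$, and on $\ad_a(P)$, of degree $n+1$) to get $r(Pa)=n\,\ad_a(P)-(n+1)\ad_a(P)=-\ad_a(P)$. You instead invoke the representation $P=r(\widetilde{P})$, which the paper establishes just before Lemma~\ref{lemma:baker_identity}, and then apply the Baker identity with $Q=a$ to get $r(Pa)=[r(\widetilde{P}),r(a)]=[P,a]=-\ad_a(P)$ in one line; this is legitimate, since Lemma~\ref{lemma:baker_identity} is stated for arbitrary polynomials $Q$ and $r(a)=a$. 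Your route is shorter and avoids the homogeneity reduction and the eigenvalue property of $r$ entirely — it is in fact the same trick the paper itself uses to prove Lemma~\ref{lemma:r_derivation} — whereas the paper's proof stays within its ladder of lemmas by exploiting the grading. Your suggested alternative (using $P=\frac{1}{n}r(P)$ from Lemma~\ref{lemma:rP_nP} and then the Baker identity) is also correct, though it reintroduces the homogeneity assumption without any gain over your main argument. The only caveat is stylistic, not logical: since Lemma~\ref{lemma:rPa_ad_aP} is later applied in the proof of Lemma~\ref{lemma:P_Lie_from_adP_Lie}, any proof of it must not presuppose that lemma, and yours does not — it rests only on material appearing before Lemma~\ref{lemma:baker_identity}.
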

\begin{proof}By linearity, it suffices to consider the case where $P$
is homogeneous of degree $n\geq 1$. 
Using Lemma~\ref{lemma:rP_nP} we obtain
\begin{align*}
r(Pa)&=r(aP)-r(\ad_a(P))=\ad_a(r(P))-r(\ad_a(P))\\
&=n\,\ad_a(P)-(n+1)\ad_a(P)=-\ad_a(P).\qedhere
\end{align*}
\end{proof}

\begin{proof}[Proof of Lemma~\ref{lemma:P_Lie_from_adP_Lie}]
Let $a\in\nA$ and $P\in \QQ\langle\nA\rangle$  not containing any term $\alpha_ka^k,\,\alpha_k\neq 0,\, k\geq 0$ such that
$\ad_a(P)$ is a Lie polynomial. We want to show that $P$ is 
a Lie polynomial. 
Splitting $P$ into homogeneous components 
and $\ad_a(P)$ into 
corresponding
homogeneous components, 
 it suffices to consider the case where
$P$ is a homogeneous polynomial of degree $n$ without a term $\alpha_na^n$
and $\ad_a(P)$ is a homogeneous Lie polynomial of degree $n+1$.
From Lemma~\ref{lemma:rPa_ad_aP} it follows
\begin{equation*}
-\ad^2_a(P)=r(\ad_a(P)a)
=r(aPa)-\underbrace{r(Paa)}_{=0}=\ad_a(r(Pa))
\end{equation*}
and thus
$\ad_a(\ad_a(P)+r(Pa))=0$ such that
\begin{equation*}
\ad_a(P)=-r(Pa)
\end{equation*}
by Lemma~\ref{lemma:ad_injectivity}.\footnote{Note that for the last equation 
we cannot use Lemma~\ref{lemma:rPa_ad_aP} directly, because we do
not (yet) know that $P$ is a Lie polynomial.}
Using Lemma~\ref{lemma:rP_nP} we obtain
\begin{equation*}
(n+1)\ad_a(P)=r(\ad_a(P))=r(aP)-r(Pa)=\ad_a(r(P))+\ad_a(P)
\end{equation*}
and thus $n\,\ad_a(P)=\ad_a(r(P))$ from which it follows
\begin{equation*}
\ad_a\left(P-\frac{1}{n}r(P)\right)=0.
\end{equation*}
Using Lemma~\ref{lemma:ad_injectivity} we obtain
$P=\frac{1}{n}r(P)$ where the right-hand side is obviously a
Lie polynomial.
\end{proof}

\vfill\eject

\end{document}